\numberwithin{theorem}{section}
\numberwithin{definition}{section}
\numberwithin{proposition}{section}
\numberwithin{lemma}{section}
\numberwithin{remark}{section}
\numberwithin{example}{section}
\numberwithin{algorithm}{section}
\begin{document}

\title{Existence of Hukuhara differentiability of fuzzy-valued functions}
%\subtitle{Do you have a subtitle?\\ If so, write it here}

\titlerunning{}        % if too long for running head
\author{U. M. Pirzada         \and
        D. C. Vakaskar %etc.
}

\institute{U. M. Pirzada \and D. C. Vakaskar \at
              Department of Applied Mathematics, Faculty of Tech. {\&} Engg., M.S.University of Baroda, Vadodara-390001, India. \\
              \email{dcvakaskar@gmail.com}           %  \\
%             \emph{Present address:} of F. Author  %  if needed
           \and
           U. M. Pirzada \\
	      \email{salmapirzada@yahoo.com}  
}

%\author{Umme Salma M. Pirzada }

%%\authorrunning{U. M. Pirzada} % if too long for running head

%\institute{Umme Salma M. Pirzada \at
%              Department of Applied Mathematics, Faculty of Tech. {\&} Engg., M.S.University of Baroda, Vadodara-390001, India. \\
%	      \email{salmapirzada@yahoo.com}  
%}

\date{}
% The correct dates will be entered by the editor

\maketitle

\begin{abstract}
In this paper, we discuss existence of Hukuhara differentiability of fuzzy-valued functions. Several examples are worked out to check that fuzzy-valued functions are one time, two times and n-times H-differentiable. We study the effects of fuzzy modelling on existence of Hukuhara differentiability of fuzzy-valued functions. We discuss existence of gH-differentiability and its comparison with H-differentiability.
\keywords{Fuzzy-valued functions \and Hukuhara differentiability \and Fuzzy modelling}
\subclass{03E72\and 90C70}
\end{abstract}

\section{Introduction}
\label{intro}

\begin{par}
Hukuhara differentiability of fuzzy-valued functions is generalization of Hukuhara differentiability of set-valued functions. This differentiability is based on Hukuhara difference. Hukuhara introduced this difference (subtraction) of two sets in \cite{HU}. He introduced the notions of integral and derivative for set-valued mappings and considered the relationship between them. This derivative is widely studied and analysed by researchers for set-valued as well as fuzzy-valued functions. Wide range of applications of Hukuhara derivatives are studied in fuzzy differential equations and fuzzy optimization problems. Unfortunately, the derivative is very restrictive. Its existence based on certain conditions. We found that it is very interesting to study and analyse these conditions for several basic and important fuzzy-functions. This study helps the researchers to identify the class of fuzzy-functions which are Hukuhara differentiable and can be utilized in applications like fuzzy optimization, fuzzy ordinary and partial differential equations. We explore fuzzy modelling part which is crucial in any fuzzy theory and study its effect on existence of Hukuhara differentiability of fuzzy-valued function by giving appropriate examples. 
\end{par}
\begin{par}
An improvement of Hukuhara difference is generalized Hukuhara difference ( gH-difference) defined in \cite{ST1,ST2}. 
It is more general than Hukuhara difference (H-difference). Bede and Stefanini \cite{BE1} have defined generalized Hukuhara differentiability of fuzzy-valued functions based on gH-difference. We discuss existence of this differentiability and its comparison with H-differentiability.
\end{par}
The paper is organized in following manner. Section 2 contains basics of fuzzy numbers. H-differentiability of a fuzzy-valued function is discussed and compared with gH-differentiability with several examples in Section 3. Effect of modelling on existence of H-differentiability is discussed in Section 4. Conclusion is given in the last section.
%======================================================================================================================
\section{Fuzzy numbers and arithmetic}
We start with some basic definitions.
\label{sec:2}
\begin{definition}\label{def1}\cite{GE} Let $\mathbb{R}$ be the set of real numbers and $\tilde{a}:\mathbb{R} \to[0,1]$ be a fuzzy set. We say that $\tilde{a}$ is a fuzzy number if it satisfies the following properties:
\begin{description}
        \item[(i)] {$\tilde{a}$ is normal, that is, there exists $x_0 \in \mathbb{R} $ such that $\tilde{a}(x_0)=1$;}
        \item[(ii)] {$\tilde{a}$ is fuzzy convex, that is,  $\tilde{a}(t x+(1-t) y)\geq min \{\tilde{a}(x),\tilde{a}(y)\}$, whenever x, y $\in
\mathbb{R}~~ and~~~t \in [0,1]$;}
        \item[(iii)] {$\tilde{a}(x)$ is upper semi-continuous on $\mathbb{R}$, that is, $\{x / \tilde{a}(x) \geq \alpha \}$ is a closed subset of $\mathbb{R}$ for each $\alpha \in (0,1]$;}
        \item[(iv)] {$cl\{x \in \mathbb{R} / \tilde{a}(x) >0\}$ forms a compact set,}
    \end{description}

where $cl$ denotes closure of a set. The set of all fuzzy numbers on $\mathbb{R}$ is denoted by $F(\mathbb{R})$. For all $\alpha \in (0,1]$, $\alpha$-level set $\tilde{a}_{\alpha}$ of any $\tilde{a}\in F(\mathbb{R})$ is defined as $\tilde{a}_{\alpha} = \{ x \in \mathbb{R}/ \tilde{a}(x)\geq \alpha \}$ . The 0-level set $\tilde{a}_{0}$ is defined as the closure of the set $\{x \in \mathbb{R} / \tilde{a}(x) >0\}$. By definition of fuzzy numbers, we can prove that, for any $\tilde{a}\in F(\mathbb{R})$ and for each $\alpha \in (0,1]$ , $\tilde{a}_{\alpha}$ is compact convex subset of $\mathbb{R}$, and we write $\tilde{a}_{\alpha}=[{\tilde{a}_{\alpha}}^{L},{\tilde{a}_{\alpha}}^{U}]$.
$\tilde{a}\in F(\mathbb{R})$ can be recovered from its $\alpha$-level sets by a well-known decomposition theorem (ref. \cite{GE1}), which states that $\tilde{a}= \cup_{\alpha \in [0,1]} \alpha \cdot \tilde{a}_{\alpha} $ where union on the right-hand side is the standard fuzzy union.
\end{definition}

\begin{definition}\label{def2}\cite{SO} Let $\tilde{a}, \tilde{b} \in F(\mathbb{R})$ with $\tilde{a}_{\alpha}=[{\tilde{a}_{\alpha}}^{L},{\tilde{a}_{\alpha}}^{U}]$ and  $\tilde{b}_{\alpha}=[{\tilde{b}_{\alpha}}^{L},{\tilde{b}_{\alpha}}^{U}]$. According to Zadeh's extension principle, addition and scalar multiplication in the set of fuzzy numbers $F(\mathbb{R})$ by their $\alpha$-level sets are given as follows:
\begin{eqnarray*}
(\tilde{a}\oplus \tilde{b})_{\alpha} & = &
[\tilde{a}_{\alpha}^{L}+\tilde{b}_{\alpha}^{L}, \tilde{a}_{\alpha}^{U}+\tilde{b}_{\alpha}^{U}] \\
(\lambda \odot \tilde{a})_{\alpha} & = &
[\lambda\cdot\tilde{a}_{\alpha}^{L},\lambda\cdot\tilde{a}_{\alpha}^{U}],~if~\lambda \geq 0 \\
			           & = &
[\lambda\cdot\tilde{a}_{\alpha}^{U},\lambda\cdot\tilde{a}_{\alpha}^{L}],~if~\lambda < 0
\end{eqnarray*}
where $\lambda \in \mathbb{R}$ and $\alpha \in [0,1]$. $(\tilde{a} \oplus \tilde{b})$ and $(\lambda \odot \tilde{a})$ can be determined using the Decomposition theorem stated in Definition \ref{def1}.
\end{definition}
The difference between two fuzzy numbers are defined as follows:

\begin{definition}
For fuzzy numbers $\tilde{a}, \tilde{b}$, with $\alpha$-level sets $\tilde{a}_{\alpha}=[{\tilde{a}_{\alpha}}^{L},{\tilde{a}_{\alpha}}^{U}]$ and  $\tilde{b}_{\alpha}=[{\tilde{b}_{\alpha}}^{L},{\tilde{b}_{\alpha}}^{U}]$,  a difference $(\tilde{a} \ominus \tilde{b})$ is defined using its $\alpha$-level sets as 
\begin{eqnarray*}
(\tilde{a}\ominus \tilde{b})_{\alpha} = [\tilde{a}_{\alpha}^{L} - \tilde{b}_{\alpha}^{U}, \tilde{a}_{\alpha}^{U} - \tilde{b}_{\alpha}^{L}],
\end{eqnarray*}
for all $\alpha \in [0,1]$. Then the difference $(\tilde{a} \ominus \tilde{b})$ is determined using the Decomposition theorem. 
\end{definition}
\begin{definition}
The membership function of a trapezoidal fuzzy number $\tilde{a}$ is defined as
\[
{\tilde{a}}(r) = \left\{
\begin{array}{ll}
 {\frac {(r-a_{1})}{(a_{2} - a_{1})}}~~~ if ~~ a_{1} \leq r \leq a_{2}  \\
 1~~~~~~~~~~if~~ a_{2} \leq r \leq a_{3}\\
 {\frac {(a_{4}-r)}{(a_{4}-a_{3})}}~~~ if ~~ a_{3} < r \leq a_{4} \\
 0~~~~~~~~~~~~~otherwise
\end{array} \right.
\]
It is denoted by $\tilde{a} = (a_{1}, a_{2}, a_{3}, a_{4})$ and has $\alpha$-level set 
$\tilde{a}_{\alpha} = [a_{1} + \alpha(a_{2} - a_{1}), a_{4} - \alpha(a_{4} - a_{3})]$.
\end{definition}

\begin{definition} \cite{SH} \label{def3}
The membership function of a triangular fuzzy number $\tilde{a}$ is defined as
\[
{\tilde{a}}(r) = \left\{
\begin{array}{ll}
 {\frac {(r-a_{1})}{(a- a_{1})}}~~~ if ~~ a_{1} \leq r \leq a  \\
 {\frac {(a_{2}-r)}{(a_{2}-a)}}~~~ if ~~ a < r \leq a_{2} \\
 0~~~~~~~~~~~~~otherwise
\end{array} \right.
\]
which is denoted by $\tilde{a} = (a_{1},a,a_{2})$. The $\alpha$-level set of $\tilde{a}$ is then 
\[
\tilde{a}_{\alpha} = [(1-\alpha)a_{1}+\alpha a, (1-\alpha)a_{2}+\alpha a].
\]
\end{definition}

\begin{definition}\label{def4}\cite{HS2} Let $A, B \subseteq \mathbb{R}^{n}$. The Hausdorff metric $d_H $ is defined by
\begin{eqnarray*}
d_H(A,B) \mathrel{\mathop:}= \max\{\sup_{x \in A}\inf_{y \in B}\|x-y\|, \sup_{y \in B}\inf_{x \in A}\|x-y\|\}. \end{eqnarray*}
Then the \textbf{metric $d_{F}$} on $F(\mathbb{R})$ is defined as 
\begin{eqnarray*}
d_{F}(\tilde{a}, \tilde{b})  \mathrel{\mathop:}=  \sup_{0 \leq \alpha \leq 1}\{d_H({\tilde{a}}_{\alpha},{\tilde{b}}_{\alpha})\},
\end{eqnarray*}
for all $\tilde{a}, \tilde{b} \in F(\mathbb{R})$. Since $\tilde{a}_{\alpha}$ and $\tilde{b}_{\alpha}$ are compact intervals in $\mathbb{R}$,
\begin{eqnarray*}
d_{F}(\tilde{a}, \tilde{b})  =  \sup_{0 \leq \alpha \leq 1} \max\{
|{{\tilde{a}}_{\alpha}}^{L}-{{\tilde{b}}_{\alpha}}^{L}|,
|{{\tilde{a}}_{\alpha}}^{U}-{{\tilde{b}}_{\alpha}}^{U}|\}.
\end{eqnarray*}
Then $(F(\mathbb{R}), d_{F})$ is a complete metric space, see \cite{GE}, with the properties
\begin{itemize}
\item [1.] $d_{F}(\tilde{a}\oplus \tilde{c}, \tilde{b} \oplus \tilde{c}) = d_{F}(\tilde{a}, \tilde{b})$, for all $\tilde{a}, \tilde{b}, \tilde{c} \in F(\mathbb{R})$
\item [2.] $d_{F}(\lambda \odot \tilde{a}, \lambda \odot \tilde{b}) = |{\lambda}| d_{F}(\tilde{a}, \tilde{b})$, for all $\tilde{a}, \tilde{b} \in F(\mathbb{R})$ and $\lambda \in \mathbb{R}$
\item [3.] $d_{F}(\tilde{a}\oplus \tilde{b}, \tilde{c} \oplus \tilde{d}) \leq d_{F}(\tilde{a}, \tilde{c}) + d_{F}(\tilde{b}, \tilde{d})$, for all $\tilde{a}, \tilde{b}, \tilde{c}, \tilde{d} \in F(\mathbb{R})$.
\end{itemize}
\end{definition}
The following results and concepts are known.
\begin{theorem} \label{theorem2.1}(See \cite{BE}) 
\begin{description}
\item [(i)] If we denote $\tilde{0} = \chi_{\{0\}}$ then $\tilde{0} \in F(\mathbb{R})$ is neutral element with respect to $\oplus$,
i.e. $\tilde{a} \oplus \tilde{0} = \tilde{0} \oplus \tilde{a} = \tilde{a}$, for all $\tilde{a} \in F(\mathbb{R})$.
\item [(ii)] With respect to $\tilde{0}$, none of $\tilde{a} \in F(\mathbb{R}) - \mathbb{R}$ , has inverse in $F(\mathbb{R})$ (with respect to $\oplus$).
\item [(iii)] For any $x, y \in \mathbb{R}$ with $x, y \geq 0$ or $x, y \leq 0$ and any $\tilde{a} \in F(\mathbb{R})$ , we have \\ $(x + y) \odot \tilde{a} = x \odot \tilde{a} \oplus y \odot \tilde{a}$; For general $x, y \in \mathbb{R}$ , the above property does not hold.
\item [(iv)] For any $\lambda \in \mathbb{R}$ and any $\tilde{a}, \tilde{b} \in F(\mathbb{R})$ , we have $\lambda \odot (\tilde{a} \oplus \tilde{b}) = \lambda \odot \tilde{a} \oplus \lambda \odot \tilde{b}$;
\item [(v)] For any $\lambda, \mu \in \mathbb{R}$ and any $\tilde{a} \in F(\mathbb{R})$ , we have $\lambda \odot (\mu \odot \tilde{a}) = (\lambda \mu) \odot \tilde{a}$.
\end{description}
\end{theorem}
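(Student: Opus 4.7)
\begin{par}
The overall strategy is to reduce every identity to a statement about $\alpha$-level sets and then invoke the decomposition theorem mentioned in Definition \ref{def1}, which guarantees that a fuzzy number is determined by its family of $\alpha$-cuts. Since both $\oplus$ and $\odot$ in Definition \ref{def2} are defined cut-wise, each claim becomes an elementary interval-arithmetic identity that can be checked by a short case split on the signs of the scalars involved.
\end{par}

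\begin{par}
For (i), one notes that $(\tilde{0})_{\alpha} = [0,0]$ for every $\alpha$ and applies the sum formula to get $(\tilde{a} \oplus \tilde{0})_{\alpha} = [\tilde{a}_{\alpha}^{L}, \tilde{a}_{\alpha}^{U}] = \tilde{a}_{\alpha}$, so the decomposition theorem yields $\tilde{a} \oplus \tilde{0} = \tilde{a}$. For (iv) and the two allowed cases in (iii), the argument is analogous: in both $\lambda \geq 0$ and $\lambda < 0$, distributing the scalar through the endpoints of $\tilde{a}_{\alpha}$ and $\tilde{b}_{\alpha}$ produces matching intervals on each side, since multiplication by a single fixed-sign scalar either preserves or swaps endpoints uniformly. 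For (v), I would run a four-way case split on the signs of $\lambda$ and $\mu$; in each case the composed scalar multiplication either preserves or swaps endpoints twice, and the resulting interval agrees with the one produced by multiplying by $\lambda\mu$, whose sign dictates the appropriate endpoint order.
\end{par}

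\begin{par}
Part (ii) is the only part that needs slightly more thought. Suppose for contradiction that some $\tilde{b} \in F(\mathbb{R})$ satisfies $\tilde{a} \oplus \tilde{b} = \tilde{0}$. Reading the sum cut-wise yields $\tilde{a}_{\alpha}^{L} + \tilde{b}_{\alpha}^{L} = 0$ and $\tilde{a}_{\alpha}^{U} + \tilde{b}_{\alpha}^{U} = 0$ for every $\alpha \in [0,1]$. Combined with $\tilde{b}_{\alpha}^{L} \leq \tilde{b}_{\alpha}^{U}$ this forces $\tilde{a}_{\alpha}^{U} \leq \tilde{a}_{\alpha}^{L}$, and together with $\tilde{a}_{\alpha}^{L} \leq \tilde{a}_{\alpha}^{U}$ gives $\tilde{a}_{\alpha}^{L} = \tilde{a}_{\alpha}^{U}$ for all $\alpha$. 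Normality pins down a common value $x_{0}$ at $\alpha = 1$, and the nesting of level sets forces $\tilde{a}_{\alpha} = \{x_{0}\}$ for every $\alpha$, so $\tilde{a} = \chi_{\{x_{0}\}} \in \mathbb{R}$, a contradiction.
\end{par}

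\begin{par}
For the negative assertion in (iii) that distributivity can fail in general, the natural counterexample is $x = 1$, $y = -1$ applied to any non-crisp $\tilde{a}$: the left-hand side collapses to $\tilde{0}$ via (i), while the $\alpha$-cut of the right-hand side, using the two cases of scalar multiplication in Definition \ref{def2}, evaluates to $[\tilde{a}_{\alpha}^{L} - \tilde{a}_{\alpha}^{U}, \tilde{a}_{\alpha}^{U} - \tilde{a}_{\alpha}^{L}]$, which properly contains $\{0\}$ for any non-crisp $\tilde{a}$. The main obstacle in the whole proof is therefore the bookkeeping for the sign case split in (v), together with the mild subtlety in (ii) of passing from ``all level sets are singletons'' to ``$\tilde{a}$ is a crisp real number,'' which uses normality and the nesting of $\alpha$-cuts; the remaining parts are direct verifications on compact intervals.
\end{par}
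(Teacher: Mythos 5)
The paper gives no proof of this theorem: it is stated as a known result imported from Bede and Gal \cite{BE} (note the preceding sentence ``The following results and concepts are known''), so there is no internal argument to compare yours against. Your cut-wise verification is the standard proof and it is essentially correct: parts (i), (iii), (iv) and (v) reduce, exactly as you say, to interval identities with a sign case split, and your part (ii) correctly isolates the one non-mechanical step, namely passing from ``every $\alpha$-cut is a singleton'' to ``$\tilde{a}$ is a crisp real'' via normality and the nesting of level sets. Two small points are worth tightening. First, in the counterexample for the failure of (iii) with $x=1$, $y=-1$: the cut $[\tilde{a}_{\alpha}^{L}-\tilde{a}_{\alpha}^{U},\,\tilde{a}_{\alpha}^{U}-\tilde{a}_{\alpha}^{L}]$ need not properly contain $\{0\}$ at \emph{every} level (at $\alpha=1$ it can degenerate to $\{0\}$ even for a non-crisp $\tilde{a}$); you only need, and only get, a nondegenerate interval at \emph{some} level, e.g.\ $\alpha=0$, which already shows the two sides are different fuzzy numbers. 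Second, in (iii) you should make explicit that the split into $\lambda\geq 0$ versus $\lambda<0$ in Definition \ref{def2} is consistent when one of $x,y$ is zero and the other is negative; this is immediate because multiplication by $0$ gives the degenerate interval $[0,0]$ under either formula, but it is the kind of boundary case a referee would ask about. With those remarks, the argument is complete.
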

%=====================================================================================================================
\section{Hukuhara differentiability}
\begin{definition}\cite{HS1}  \label{def5}
Let $V$ be a real vector space and $F(\mathbb{R})$ be a set of fuzzy numbers. Then a function $\tilde{f}: V \to F(\mathbb{R})$ is called fuzzy-valued function on $V$. Corresponding to such a function $\tilde{f}$ and $\alpha \in [0,1]$, we define two real-valued functions $\tilde{f}_{\alpha}^L$ and $\tilde{f }_{\alpha}^U$ on $V$ as ${\tilde{f}_{\alpha}}^{L}(x) = (\tilde{f}(x))_{\alpha}^{L}$ and ${\tilde{f}_{\alpha}}^{U}(x) = (\tilde{f}(x))_{\alpha}^{U}$ for all $x \in V$. These functions $\tilde{f}_{\alpha}^L(x)$ and $\tilde{f }_{\alpha}^U(x)$ are called $\alpha$-level functions of the fuzzy-valued function $\tilde{f}$.
\end{definition}

\begin{remark}
The difference ($\ominus$) between two fuzzy numbers is a usual fuzzy difference based on interval arithmetic. This difference always exists. But the important property is not valid, i.e. $((\tilde{a}+\tilde{b}) \ominus \tilde{b})_{\alpha} \neq \tilde{a}_{\alpha}$. Even, $\tilde{a} \ominus \tilde{a} \neq \tilde{0}$, where $\tilde{0} = \chi_{\{0\}}$.
\end{remark}

To overcome this situation, Hukuhara difference is defined.

\begin{definition}
Let $\tilde{a}$ and $\tilde{b}$ be two fuzzy numbers. If there exists a fuzzy number $\tilde{c}$  such that $\tilde{c} \oplus \tilde{b}= \tilde{a}$. Then $\tilde{c}$ is called Hukuhara difference of $\tilde{a}$ and $\tilde{b}$ and is denoted by $\tilde{a} \ominus_{H}\tilde{b}$.
\end{definition}

We have following remarks:

\begin{remark}
\begin{itemize}
\item [1.] The necessary condition for Hukuhara difference $\tilde{a} \ominus_{H}\tilde{b}$ to exist is that some translate of $\tilde{b}$ is a fuzzy subset of $\tilde{a}$. For instance,  $\tilde{a}= (-1, 1, 3)$ and $\tilde{b}= (-1, 0, 1)$ are triangular fuzzy numbers such that $(-1, 1, 3) = (0, 1, 2) \oplus (-1, 0, 1)$ then $(0,1,2)$ is called H-difference of $\tilde{a}$ and $\tilde{b}$, as $\tilde{b}$ is a fuzzy subset of $\tilde{a}$.
\item [2.] Here $(\tilde{a} \odot_{H} \tilde{a}) = \tilde{0}$ and $(\tilde{a}+\tilde{b}) \odot_{H} \tilde{b} = \tilde{a}$, for all $\tilde{a}, \tilde{b} \in F(\mathbb{R})$.
\end{itemize}
\end{remark}

Hukuhara differentiability ( H-differentiability ) of a fuzzy-valued function due to Puri and Ralescu \cite{PU} define as follows:

\begin{definition}\label{def6} Let $X$ be a subset of $\mathbb{R}$. A fuzzy-valued function
$\tilde{f}: X \to F(\mathbb{R})$ is said to be H-differentiable at
$x^{0} \in X$ if and only if there exists a fuzzy number $D\tilde{f}(x^{0})$ such that the limits (with respect to metric $d_{F}$)
\[ \lim_{h \rightarrow 0^{+}} {{\frac{1}{h}}\odot [{\tilde{f}(x^{0}+h)\ominus_{H}\tilde{f}(x^{0})}]},~~and~~
 \lim_{h \rightarrow 0^{+}} {{\frac{1}{h}}\odot [{\tilde{f}(x^{0})\ominus_{H}\tilde{f}(x^{0}-h)}]}\]
 both exist and are equal to ${D\tilde{f}}(x^{0})$. In this case,
 $D\tilde{f}(x^{0})$ is called the H-derivative of $\tilde{f}$ at $x^{0}$. If $\tilde{f}$ is H-differentiable at any $x \in X $, we call $\tilde{f}$ is H-differentiable over $X$.
\end{definition}

We have following proposition regarding differentiability of $\tilde{f}_{\alpha}^{L}$ and $\tilde{f}_{\alpha}^{U}$.

\begin{proposition}\label{proh}
Let $X$ be a subset of $\mathbb{R}$. If a fuzzy-valued function $\tilde{f}:X \to F(\mathbb{R})$ is H-differentiable at $x^{0}$ with derivative ${D\tilde{f}}(x^{0})$, then ${\tilde{f}_{\alpha}^{L}}(x)$ and ${\tilde{f}_{\alpha}^{U}}(x)$ are differentiable at $x^{0}$, for all $\alpha \in [0,1]$. Moreover, we have
$(D{\tilde{f})_{\alpha}(x^{0})}=
[D({\tilde{f}_{\alpha}^{L})(x^{0})},D({\tilde{f}_{\alpha}^{U})(x^{0})}]$.
\end{proposition}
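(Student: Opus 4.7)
The plan is to unfold the definition of H-differentiability and push it down to the $\alpha$-level endpoints, by taking $\alpha$-level sets of the Hukuhara difference quotients and invoking the fact that convergence in $d_F$ forces convergence of the left and right endpoints of the $\alpha$-level intervals.

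First I would fix $\alpha\in[0,1]$ and use the assumption that both one-sided H-difference quotients exist for small $h>0$. By the defining property of the Hukuhara difference (if $\tilde{c}\oplus\tilde{b}=\tilde{a}$ then passing to $\alpha$-level sets gives $c_\alpha^L+b_\alpha^L=a_\alpha^L$ and $c_\alpha^U+b_\alpha^U=a_\alpha^U$), together with the rules for $\oplus$ and $\odot$ in Definition \ref{def2}, the $\alpha$-level set of $\tfrac{1}{h}\odot[\tilde{f}(x^0+h)\ominus_H\tilde{f}(x^0)]$ is
\[
\left[\frac{\tilde{f}_\alpha^L(x^0+h)-\tilde{f}_\alpha^L(x^0)}{h},\ \frac{\tilde{f}_\alpha^U(x^0+h)-\tilde{f}_\alpha^U(x^0)}{h}\right],
\]
and analogously for the backward quotient $\tfrac{1}{h}\odot[\tilde{f}(x^0)\ominus_H\tilde{f}(x^0-h)]$.

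Next I would translate convergence in $(F(\mathbb{R}),d_F)$ into convergence of the endpoints. By Definition \ref{def4},
\[
d_F(\tilde{u},\tilde{v})=\sup_{0\le\alpha\le 1}\max\!\bigl\{|\tilde{u}_\alpha^L-\tilde{v}_\alpha^L|,\ |\tilde{u}_\alpha^U-\tilde{v}_\alpha^U|\bigr\},
\]
so $d_F$-convergence to $D\tilde{f}(x^0)$ as $h\to 0^+$ yields, for every $\alpha$,
\[
\lim_{h\to 0^+}\frac{\tilde{f}_\alpha^L(x^0+h)-\tilde{f}_\alpha^L(x^0)}{h}=(D\tilde{f})_\alpha^L(x^0),
\]
and similarly with $U$ in place of $L$, and with the backward quotients. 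Combining these shows that the left-hand and right-hand derivatives of $\tilde{f}_\alpha^L$ (respectively $\tilde{f}_\alpha^U$) at $x^0$ exist and are both equal to $(D\tilde{f})_\alpha^L(x^0)$ (respectively $(D\tilde{f})_\alpha^U(x^0)$), so the real-valued functions are differentiable at $x^0$ in the ordinary sense, with
\[
(D\tilde{f})_\alpha(x^0)=\bigl[D(\tilde{f}_\alpha^L)(x^0),\ D(\tilde{f}_\alpha^U)(x^0)\bigr].
\]

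The only subtle point is justifying the formula for the $\alpha$-level set of the Hukuhara difference quotient; this is the main step and rests on the very definition of $\ominus_H$ (the existence of a fuzzy $\tilde{c}$ with $\tilde{c}\oplus\tilde{f}(x^0)=\tilde{f}(x^0+h)$) combined with the $\alpha$-level formulas for $\oplus$ and $\odot$. Once that is in hand, the rest of the argument is immediate and uniform in $\alpha$.
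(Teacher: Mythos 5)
Your proposal is correct: passing to $\alpha$-level sets of the Hukuhara difference quotients via $\tilde{c}\oplus\tilde{b}=\tilde{a}\Rightarrow \tilde{c}_\alpha^{L}=\tilde{a}_\alpha^{L}-\tilde{b}_\alpha^{L}$, $\tilde{c}_\alpha^{U}=\tilde{a}_\alpha^{U}-\tilde{b}_\alpha^{U}$, and then using that $d_F$-convergence dominates the endpoint differences at every level, is exactly the standard (Kaleva/Puri--Ralescu) argument, and the treatment of the backward quotient as the left-hand derivative is handled properly. The paper itself states this proposition without proof, so there is nothing to compare against, but your argument is the one the authors are implicitly relying on and it is complete.
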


Bede and Gal have discussed the conditions for existence of H-differentiability of fuzzy-valued functions in \cite{BE}. We put these conditions in the form of following proposition. 

\begin{proposition}\label{pro}
Let $\tilde{c} \in F(\mathbb{R}) ~and~ g: (a,b) \to \mathbb{R}_{+}$ be differentiable at $x_{0} \in (a,b) \subset \mathbb{R}_{+}$. Define $\tilde{f}: (a,b) \to F(\mathbb{R})$ by $\tilde{f}(x) = \tilde{c}\odot g(x)$, for all $x \in (a,b)$. If we suppose that, $g^{\prime}(x_{0}) > 0 $ then Hukuhara differences of $\tilde{f}$ exist and $\tilde{f}$ is H-differentiable at $x_{0}$ with $\tilde{f}^{\prime}(x) = \tilde{c}\odot g^{\prime}(x)$. 
\end{proposition}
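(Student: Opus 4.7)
The plan is to reduce the fuzzy H-differentiability to the ordinary differentiability of the scalar function $g$, by explicitly producing the Hukuhara differences and then taking limits level-wise in a uniform manner.

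First I would record the $\alpha$-level structure of $\tilde{f}$. Since $g(x) > 0$ on $(a,b)$, Definition \ref{def2} gives
$\tilde{f}(x)_\alpha = (\tilde{c}\odot g(x))_\alpha = [g(x)\,\tilde{c}_\alpha^L,\; g(x)\,\tilde{c}_\alpha^U]$
for every $\alpha \in [0,1]$. Next I would use $g'(x_0) > 0$ to extract the key monotonicity information: the difference quotient $(g(x_0+h) - g(x_0))/h$ tends to $g'(x_0) > 0$ as $h \to 0$, so for all sufficiently small $h > 0$ one has $g(x_0+h) > g(x_0)$ and $g(x_0) > g(x_0-h)$.

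Then I would produce the Hukuhara differences explicitly. For such small $h$, define
$\tilde{d}_h^{+} := \bigl(g(x_0+h) - g(x_0)\bigr)\odot\tilde{c}, \qquad \tilde{d}_h^{-} := \bigl(g(x_0) - g(x_0-h)\bigr)\odot\tilde{c}.$
Both are genuine fuzzy numbers because the scalar factors are positive. Using the addition rule on $\alpha$-level sets and the positivity of $g$, a direct computation shows
$(\tilde{d}_h^{+} \oplus \tilde{f}(x_0))_\alpha = [g(x_0+h)\,\tilde{c}_\alpha^L,\; g(x_0+h)\,\tilde{c}_\alpha^U] = \tilde{f}(x_0+h)_\alpha$
for every $\alpha$, which by the Decomposition Theorem in Definition \ref{def1} forces $\tilde{d}_h^{+} \oplus \tilde{f}(x_0) = \tilde{f}(x_0+h)$. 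Thus $\tilde{f}(x_0+h)\ominus_H \tilde{f}(x_0) = \tilde{d}_h^{+}$ exists, and similarly $\tilde{f}(x_0)\ominus_H \tilde{f}(x_0-h) = \tilde{d}_h^{-}$.

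Finally I would take the limit. Dividing by $h > 0$ and using associativity of scalar multiplication (Theorem \ref{theorem2.1}(v)),
$\tfrac{1}{h}\odot \tilde{d}_h^{+} = \tfrac{g(x_0+h)-g(x_0)}{h}\odot \tilde{c}, \qquad \tfrac{1}{h}\odot \tilde{d}_h^{-} = \tfrac{g(x_0)-g(x_0-h)}{h}\odot \tilde{c}.$
Letting $\lambda_h$ denote the scalar coefficient on either side, both $\lambda_h \to g'(x_0)$, and both $\lambda_h$ and $g'(x_0)$ are eventually positive. Writing out the $d_F$-distance from $\lambda_h\odot\tilde{c}$ to $g'(x_0)\odot\tilde{c}$ via Definition \ref{def4},
$d_F\bigl(\lambda_h\odot\tilde{c},\,g'(x_0)\odot\tilde{c}\bigr) = |\lambda_h - g'(x_0)|\cdot\sup_{0\le\alpha\le 1}\max\bigl\{|\tilde{c}_\alpha^L|,\,|\tilde{c}_\alpha^U|\bigr\}.$
The supremum is finite because $\tilde{c}_\alpha \subseteq \tilde{c}_0$ and $\tilde{c}_0$ is compact by Definition \ref{def1}(iv). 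Hence both sided limits in Definition \ref{def6} exist and equal $g'(x_0)\odot\tilde{c}$, proving H-differentiability and the stated formula.

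The main obstacle is the existence of the Hukuhara differences, which is the restrictive ingredient in H-differentiability; everything rests on $g'(x_0) > 0$ forcing local monotonicity of $g$ so that the positive scalar multiple $(g(x_0+h)-g(x_0))\odot\tilde{c}$ serves as a bona fide fuzzy summand. Once this is in place, the convergence step is just the scalar differentiability of $g$ transported through the uniform bound on the support of $\tilde{c}$.
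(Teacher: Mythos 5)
Your proposal is correct and follows essentially the same route as the paper: use $g'(x_{0})>0$ to get $g(x_{0}+h)-g(x_{0})>0$ and $g(x_{0})-g(x_{0}-h)>0$ for small $h>0$, then invoke distributivity over nonnegative scalars (Theorem \ref{theorem2.1}(iii)) to exhibit $(g(x_{0}+h)-g(x_{0}))\odot\tilde{c}$ as the Hukuhara difference. In fact your write-up is more complete than the paper's, which stops after establishing existence of the H-differences; your final step, verifying convergence of $\lambda_{h}\odot\tilde{c}$ to $g'(x_{0})\odot\tilde{c}$ in the metric $d_{F}$ via the uniform bound $\sup_{\alpha}\max\{|\tilde{c}_{\alpha}^{L}|,|\tilde{c}_{\alpha}^{U}|\}<\infty$, is exactly what is needed to justify the stated formula $\tilde{f}'(x_{0})=\tilde{c}\odot g'(x_{0})$ and is omitted there.
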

\begin{proof}
Since $g^{\prime}(x_{0}) > 0 $, then 
\[ g^{\prime}(x_{0}) = \lim_{h \rightarrow 0} {{g(x_{0} + h) - g(x_{0})} \over {h}}, \]
it follows that for $h > 0$ sufficiently small, we have ${g(x_{0} + h) - g(x_{0})} = \omega(x_{0},h) > 0$. Multiplying by $\tilde{c}$ it follows $\tilde{c} \odot g(x_{0} + h) = \tilde{c} \odot g(x_{0}) \oplus \tilde{c} \odot \omega(x_{0},h)$  (An operator $\odot$ defines multiplication of a fuzzy number with a real number).  That is, there exists Hukuhara difference ${\tilde{f}(x_{0} + h) {\ominus}_{H} \tilde{f}(x_{0})}$. Similarly, by \[ g^{\prime}(x_{0}) = \lim_{h \rightarrow 0} {{g(x_{0}) -g(x_{0} - h) } \over {h}}, \]
reasoning as above, there exists the Hukuhara difference ${\tilde{f}(x_{0}) {\ominus}_{H} \tilde{f}(x_{0}- h)}$. \\
Now, if we suppose $g^{\prime}(x_{0}) < 0 $, we see that we cannot get the above kind of reasoning to prove that the H-differences ${\tilde{f}(x_{0} + h) {\ominus}_{H} \tilde{f}(x_{0})}$, ${\tilde{f}(x_{0}) {\ominus}_{H} \tilde{f}(x_{0}- h)}$ and the derivative $\tilde{f}^{\prime}(x_{0})$ exist. \qed
\end{proof}

The generalized Hukuhara difference ( gH-difference) coming from \cite{ST1,ST2} is more general than Hukuhara difference (H-difference). From an algebraic point of view, the difference between two sets may be interpreted both in terms of addition as in $\ominus_{H}$ or in terms of negative addition. Based on this fact, the generalized Hukuhara difference (gH-difference) is defined as follows:

\begin{definition}\cite{ST1,ST2}\label{def7}
Given two fuzzy numbers $\tilde{a}, \tilde{b} \in F(\mathbb{R})$, the gH-difference is the fuzzy number $\tilde{c}$, if exists, such that 
\begin{equation}
\tilde{a} \ominus_{gH} \tilde{b} = \tilde{c} \Leftrightarrow \left\{
\begin{array}{ll}
 ~~~(i) ~\tilde{a} = \tilde{b} + \tilde{c}, \\
 or~ (ii)~ \tilde{b} = \tilde{a} - \tilde{c}. 
\end{array} \right.
\end{equation}
\end{definition}
In terms of $\alpha$-level sets, $[\tilde{a} \ominus_{gH} \tilde{b}]_{\alpha} = [\min\{ \tilde{a}_{\alpha}^{L} - \tilde{b}_{\alpha}^{L}, \tilde{a}_{\alpha}^{U} - \tilde{b}_{\alpha}^{U}\}, \max\{ \tilde{a}_{\alpha}^{L} - \tilde{b}_{\alpha}^{L}, \tilde{a}_{\alpha}^{U} - \tilde{b}_{\alpha}^{U} \}]$ and if H-difference exists, then $\tilde{a} \odot_{H} \tilde{b}= \tilde{a} \odot_{gH} \tilde{b}$; the conditions for the existence of $\tilde{c} = \tilde{a} \odot_{gH} \tilde{b} \in F(\mathbb{R})$ are
\begin{equation}\label{case1}
case(i)~ \left\{
\begin{array}{ll}
 \tilde{c}_{\alpha}^{L} = \tilde{a}_{\alpha}^L-\tilde{b}_{\alpha}^L ~and~ \tilde{c}_{\alpha}^{U} = \tilde{a}_{\alpha}^{U}-\tilde{b}_{\alpha}^{U}  \\
with~\tilde{c}_{\alpha}^{L} ~ increasing,~\tilde{c}_{\alpha}^{U}~decreasing,~ \tilde{c}_{\alpha}^{L} \leq \tilde{c}_{\alpha}^{U},
\end{array} \right.
\end{equation}
for all $\alpha \in [0,1]$.

\begin{equation}\label{case2}
case(ii)~ \left\{
\begin{array}{ll}
 \tilde{c}_{\alpha}^{L} = \tilde{a}_{\alpha}^U-\tilde{b}_{\alpha}^U ~and~ \tilde{c}_{\alpha}^{U} = \tilde{a}_{\alpha}^{L}-\tilde{b}_{\alpha}^{L}  \\
with~\tilde{c}_{\alpha}^{L} ~ increasing,~\tilde{c}_{\alpha}^{U}~decreasing,~ \tilde{c}_{\alpha}^{L} \leq \tilde{c}_{\alpha}^{U},
\end{array} \right.
\end{equation}
for all $\alpha \in [0,1]$.

We discuss some examples.

\begin{example}
Consider two triangular shape fuzzy numbers $\tilde{a} = (3,4,5)$ and $\tilde{b} = (-3,-2,-1)$. The gH-difference $\tilde{a} \odot_{gH} \tilde{b}$ exists as 
\begin{eqnarray*}(\tilde{a} \odot_{gH} \tilde{b})_{\alpha}^{L} = \min\{ \tilde{a}_{\alpha}^{L}-\tilde{b}_{\alpha}^{L}, \tilde{a}_{\alpha}^U-\tilde{b}_{\alpha}^U\} = \min\{ (3+\alpha)-(-3+\alpha), (5-\alpha)-(-1-\alpha)\} = 6\end{eqnarray*}
and  
\begin{eqnarray*}(\tilde{a} \odot_{gH} \tilde{b})_{\alpha}^{U} = \max\{ \tilde{a}_{\alpha}^{L}-\tilde{b}_{\alpha}^{L}, \tilde{a}_{\alpha}^U-\tilde{b}_{\alpha}^U\} = \max\{ (3+\alpha)-(-3+\alpha), (5-\alpha)-(-1-\alpha)\} = 6.\end{eqnarray*}
It satisfies the conditions for the existence in (\ref{case1}) and (\ref{case2}).
\end{example}
In some case, gH-difference of two fuzzy numbers may not exists.

\begin{example}
Consider a triangular shape fuzzy number $\tilde{a} = (0,2,4)$ and a trapezoidal shape fuzzy number $\tilde{b} = (0,1,2,3)$. The gH-difference $\tilde{a} \odot_{gH} \tilde{b}$ does not exist as 
\begin{eqnarray*}
(\tilde{a} \odot_{gH} \tilde{b})_{0}^{L} = \tilde{a}_{0}^{L}-\tilde{b}_{0}^{L} = 0 < (\tilde{a} \odot_{gH} \tilde{b})_{0}^{U} = \tilde{a}_{0}^U-\tilde{b}_{0}^U = 1 
\end{eqnarray*}
and  
\begin{eqnarray*}
(\tilde{a} \odot_{gH} \tilde{b})_{1}^{L} = 1 > (\tilde{a} \odot_{gH} \tilde{b})_{1}^{U} = 0,
\end{eqnarray*}
so neither (\ref{case1}) nor (\ref{case2}) hold for any $\alpha \in [0,1]$.
\end{example}

\begin{par}
 Bede and Stefanini\cite{BE1} have defined Generalized Hukuhara differentiability ( gH-differentiability ) of fuzzy-valued functions based on gH-difference. The generalized Hukuhara differentiability (gH-differentiability ) of the fuzzy-valued functions is defined as:
\end{par}
\begin{definition}\cite{BE1}\label{def8}
Let $x_{0} \in X$, $X$ is an open interval and $h$ be such that $x_{0} + h \in X$, then gH-derivative of a function $\tilde{f}: X \to F(\mathbb{R})$ at $x_{0}$ is defined as
\begin{equation}\label{eq}
\tilde{f}^{\prime}_{gH}(x_{0}) = \lim_{h \to 0} {{{1} \over {h}} \tilde{f}(x_{0} + h ) \ominus_{gH} \tilde{f}(x_{0}). }                                      
\end{equation}
If $\tilde{f}^{\prime}_{gH}(x_{0}) \in F(\mathbb{R})$ satisfying (\ref{eq}) exists, we say that $\tilde{f}$ is generalized Hukuhara differentiable ( gH-differentiable ) at $x_{0}$. If $\tilde{f}$ is gH-differentiable at each $x \in X $ with gH-derivative $\tilde{f}^{\prime}_{gH}(x) \in F(\mathbb{R})$, we say that $\tilde{f}$ is gH-differentiable on $X$. Moreover, we can define continuously gH-differentiable fuzzy-valued function using continuity of gH-differentiable fuzzy-valued function. For continuity of a fuzzy-valued function, refer Definition 3.2 in \cite{PI}.
\end{definition}
Now we consider some fuzzy-valued functions and check existence of Hukuhara and generalized Hukuhara differentiability. First we consider a constant fuzzy-valued function. 
\begin{example}\label{illus1}
Consider a fuzzy-valued function $\tilde{f}(x) = \tilde{a}$, $x \in \mathbb{R}$ and $\tilde{a}$ is a fuzzy number with $\alpha$-level sets $\tilde{a}_{\alpha} = [\tilde{a}^{L}_{\alpha}, \tilde{a}^{U}_{\alpha}]$. We claim that $\tilde{f}$ is H-differentiable. $\tilde{f}(x)$ can be written as  $\tilde{f}(x) = \tilde{a}\odot g(x)$ where $g(x) = 1 $ for $x\in \mathbb{R}$. Since $g^{\prime}(x) = 0$, we can not use Proposition \ref{proh}. For sufficiently small $h> 0$, $g(x+h)-g(x) = 0$. We can write $g(x+h) = g(x) + 0$. Multiplying by fuzzy number $\tilde{a}$, $\tilde{a}\odot g(x+h) = \tilde{a} \odot (g(x) + 0 ) = \tilde{a}\odot g(x) \oplus \tilde{a}\odot 0$ (using Theorem \ref{theorem2.1} as $g(x) = 1 > 0 $). Therefore, there exist H-difference $\tilde{f}(x+h)\ominus_{H} \tilde{f}(x)$ which is zero. Similar way, we show existence of $\tilde{f}(x)\ominus_{H} \tilde{f}(x-h)$. Therefore, Hukuhara derivative of constant fuzzy-valued function $\tilde{f}(x) = \tilde{a}$ exists and its derivative is $\tilde{f}^{\prime}(x) = \tilde{0}$. Here we see that $\alpha$-level functions $\tilde{f}^{L}_{\alpha}(x)= \tilde{a}^{L}_{\alpha}$ and $\tilde{f}^{U}_{\alpha}(x) = \tilde{a}^{U}_{\alpha} = $ are differentiable functions with respect to $x$ for all $\alpha \in [0,1]$. 
\end{example}
Consider a linear fuzzy-valued function. 
\begin{example}
Consider a fuzzy-valued function $\tilde{f}(x) = \tilde{a}\odot x$, $x \in \mathbb{R}$  $( \tilde{a}$ is a fuzzy number). For $x > 0 $, $\tilde{f}(x)$ is Hukuhara differentiable, since $\tilde{f}(x) = \tilde{a}\odot g(x)$ where $g(x): \mathbb{R}_{+} \to \mathbb{R}_{+}$ with $g^{\prime}(x) = 1 > 0$ (see Proposition \ref{proh}). For $x < 0 $, $\alpha$-level functions $\tilde{f}_{\alpha}^{L}(x) = \tilde{a}_{\alpha}^{U}x$ and $\tilde{f}_{\alpha}^{U}(x) = \tilde{a}_{\alpha}^{L}x$ of the fuzzy-valued function $\tilde{f}(x) = \tilde{a}\odot x$. For sufficiently small $h>0$, $\tilde{f}_{\alpha}^{L}(x+h) - \tilde{f}_{\alpha}^{L}(x) = \tilde{a}_{\alpha}^{U} h$ and $\tilde{f}_{\alpha}^{U}(x+h) - \tilde{f}_{\alpha}^{U}(x) = \tilde{a}_{\alpha}^{L} h$. Hukuhara difference does not exist as $\tilde{f}_{\alpha}^{L}(x+h) - \tilde{f}_{\alpha}^{L}(x) = \tilde{a}_{\alpha}^{U} h \nleq \tilde{f}_{\alpha}^{U}(x+h) - \tilde{f}_{\alpha}^{U}(x) = \tilde{a}_{\alpha}^{L} h$ ( $\tilde{a}_{\alpha}^{L} \leq \tilde{a}_{\alpha}^{U} $, $\alpha \in [0,1]$ and $h > 0$). Therefore, $\tilde{f}(x)$ is not Hukuhara differentiable for $x<0$. We also see that $\alpha$-level functions $\tilde{f}_{\alpha}^{L}(x) = \tilde{a}_{\alpha}^{U}x$ and $\tilde{f}_{\alpha}^{U}(x) = \tilde{a}_{\alpha}^{L}x$, $\alpha \in [0,1]$ are differentiable functions of $x$. We see that $\tilde{f}(x)$ is gH-differentiable, for $x < 0$. Because, here Hukuhara differences are given as 
\begin{eqnarray*}
 [\tilde{f}(x+h) \ominus_{gH} \tilde{f}(x)]_{\alpha}& = & [\min\{\tilde{f}_{\alpha}^{L}(x+h) - \tilde{f}_{\alpha}^{L}(x), \tilde{f}_{\alpha}^{U}(x+h) - \tilde{f}_{\alpha}^{U}(x)\}, \\
&  & \max\{\tilde{f}_{\alpha}^{L}(x+h) - \tilde{f}_{\alpha}^{L}(x), \tilde{f}_{\alpha}^{U}(x+h) - \tilde{f}_{\alpha}^{U}(x)\}] \\
& = & [\tilde{a}_{\alpha}^{L} h, \tilde{a}_{\alpha}^{U} h]
\end{eqnarray*}
for all $\alpha \in [0,1]$ and $h > 0$. While gH-differences 
\begin{eqnarray*}
\tilde{f}(x) \ominus_{gH} \tilde{f}(x-h)]_{\alpha} & = & [\min\{\tilde{f}_{\alpha}^{L}(x) - \tilde{f}_{\alpha}^{L}(x-h), \tilde{f}_{\alpha}^{U}(x) - \tilde{f}_{\alpha}^{U}(x-h)\}, \\
&  & \max\{\tilde{f}_{\alpha}^{L}(x) - \tilde{f}_{\alpha}^{L}(x-h), \tilde{f}_{\alpha}^{U}(x) - \tilde{f}_{\alpha}^{U}(x-h)\}] \\
& = & [\tilde{a}_{\alpha}^{L} h, \tilde{a}_{\alpha}^{U} h] 
\end{eqnarray*} for $\alpha \in [0,1]$ and $h > 0$. Now we check Hukuhara differentiability of the fuzzy-valued function on $\mathbb{R}$. We see that $\alpha$-level functions $\tilde{f}_{\alpha}(x)$ are defined as 
\begin{eqnarray*}
\tilde{f}_{\alpha}(x) = \left\{
\begin{array}{ll}
 [\tilde{a}_{\alpha}^{L}x,\tilde{a}_{\alpha}^{U}x] ~~~ when ~~ x \geq 0,  \\
 {[\tilde{a}_{\alpha}^{U}x,\tilde{a}_{\alpha}^{L}x] ~~~ when ~~ x < 0}
\end{array} \right.
\end{eqnarray*}
are not differentiable at $x = 0 $. Therefore, we say that the fuzzy-valued function $\tilde{a}\odot x$, $x \in \mathbb{R}$ is not Hukuhara differentiable at $x = 0$. However, it is gH-differentiable on $\mathbb{R}$ and is gH-derivative is $\tilde{f}^{\prime}(x) = \tilde{a}$.
\end{example}
Consider an another example of quadratic fuzzy-valued function.
\begin{example}
Let $\tilde{f}: [-1,1] \to F(\mathbb{R})$ be a fuzzy-valued function defined as $\tilde{f}(x) = \tilde{a}\odot x^{2}$ where $\tilde{a}$ is a fuzzy number with $\alpha$-level sets $\tilde{a}_{\alpha} = [\tilde{a}^{L}_{\alpha}, \tilde{a}^{U}_{\alpha}]$. Its $\alpha$-level functions are defined as $\tilde{f}_{\alpha}(x) =  [\tilde{a}_{\alpha}^{L}x^2 ,\tilde{a}_{\alpha}^{U}x^2]$ for $x \in [-1,1]$. First we check Hukuhara differentiability of $\tilde{f}$. We express $\tilde{f}$ as $\tilde{f}(x) = \tilde{a} \odot g(x)$, where $g(x) = x^2$, $x \in [-1,1]$. For $ -1 \leq x < 0 $ and sufficiently small $h > 0$, we have $g(x+h) - g(x) = w(x,h)$, $w(x,h) = 2xh+ h^2$. We can not find a fuzzy number of the form $\tilde{a}\odot w(x,h)$ for $x < 0 $. Therefore, in this case Hukuhara difference does not exist and hence given fuzzy-valued function is not H-differentiable. For $ 0 < x \leq 1 $, $g(x) = x^2$ with $g^{\prime}(x) = 2x > 0$. By Proposition \ref{pro}, Hukuhara differences exist for given fuzzy-valued function. Now at $x = 0$, for sufficiently small $h > 0$, we have $g(x+h) - g(x) = w_{1}(h)$, $w_{1}(h) = h^2$. Here we can defined a fuzzy number in the form $\tilde{a} \odot w_{1}(h) = \tilde{a} \odot h^2$. Therefore, H-difference exists. Hence, we observed that the given fuzzy-valued function is Hukuhara differentiable for $ 0 \leq x \leq 1 $ while its $\alpha$-level functions $\tilde{f}_{\alpha}^{L}(x) =  \tilde{a}_{\alpha}^{L}x^2 $ and $\tilde{f}_{\alpha}^{U}(x) =  \tilde{a}_{\alpha}^{U}x^2 $ are differentiable for $x \in [-1,1]$. It is easy to see that the given fuzzy-valued function is gH-differentiable for $x \in [-1,1]$.
\end{example}
\begin{par}
In general, a fuzzy-valued function $\tilde{f}: \mathbb{R} \to F(\mathbb{R})$ with $\tilde{f}(x) = \tilde{a} \odot g(x)$ where $g$ is a real-valued function with domain $x < 0 $ or $g^{\prime}(x) < 0 $ is not Hukuhara differentiable but it is gH-differentiable.  In both the cases, its lower and upper level functions may or may not be differentiable. 
\end{par}
We state one important property of H-differentiability.
\begin{theorem}\cite{PH}
If $\tilde{f}, \tilde{g}: X \to F(\mathbb{R})$ are Hukuhara differentiable at $x_{0} \in X \subseteq \mathbb{R}$, then $\tilde{f} \oplus \tilde{g}$ and $c \odot \tilde{f}$, for $c \in \mathbb{R}$ are Hukuhara differentiable at $x_{0}$ and 
\begin{eqnarray*}
(\tilde{f} \oplus \tilde{g})^{\prime}(x_{0}) = \tilde{f}^{\prime}(x_{0}) \oplus \tilde{g}^{\prime}(x_{0}),~(c\odot\tilde{f})^{\prime}(x_{0}) = c\odot \tilde{f}^{\prime}(x_{0})
\end{eqnarray*}
\end{theorem}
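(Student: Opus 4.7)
My plan is to verify each claim directly from Definition \ref{def6}. The task splits into an \emph{algebraic} step (showing the H-difference of $\tilde{f}\oplus\tilde{g}$, respectively $c\odot\tilde{f}$, exists and equals the expected combination of the H-differences of $\tilde{f}$ and $\tilde{g}$) and an \emph{analytic} step (showing the resulting difference quotient converges in the metric $d_F$). The analytic step is essentially free once the algebraic step is done, thanks to properties (2) and (3) of $d_F$ in Definition \ref{def4}.

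I would treat the sum first. Fix $h>0$ small and set $\tilde{u} = \tilde{f}(x_0+h)\ominus_H\tilde{f}(x_0)$ and $\tilde{v} = \tilde{g}(x_0+h)\ominus_H\tilde{g}(x_0)$, which exist by hypothesis. By the defining property of the H-difference, $\tilde{f}(x_0+h)=\tilde{f}(x_0)\oplus\tilde{u}$ and $\tilde{g}(x_0+h)=\tilde{g}(x_0)\oplus\tilde{v}$; adding gives $(\tilde{f}\oplus\tilde{g})(x_0+h) = (\tilde{f}\oplus\tilde{g})(x_0)\oplus(\tilde{u}\oplus\tilde{v})$, so $(\tilde{f}\oplus\tilde{g})(x_0+h)\ominus_H(\tilde{f}\oplus\tilde{g})(x_0)$ exists and equals $\tilde{u}\oplus\tilde{v}$. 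Multiplying by $\tfrac{1}{h}$ and invoking property (3) of $d_F$,
\[ d_F\!\left(\tfrac{1}{h}\odot(\tilde{u}\oplus\tilde{v}),\; \tilde{f}^{\prime}(x_0)\oplus\tilde{g}^{\prime}(x_0)\right) \leq d_F\!\left(\tfrac{1}{h}\odot\tilde{u},\; \tilde{f}^{\prime}(x_0)\right) + d_F\!\left(\tfrac{1}{h}\odot\tilde{v},\; \tilde{g}^{\prime}(x_0)\right),\]
and both terms on the right tend to $0$ as $h\to 0^{+}$ by the H-differentiability of $\tilde{f}$ and $\tilde{g}$. The identical argument applies to the left-sided H-difference, giving the first identity.

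For $c\odot\tilde{f}$, I would split on the sign of $c$. The case $c=0$ is trivial. For $c>0$, the $\alpha$-level endpoints of $c\odot\tilde{f}$ are $c\,\tilde{f}^{L}_\alpha$ and $c\,\tilde{f}^{U}_\alpha$, and a direct $\alpha$-level computation (using Proposition \ref{proh} to read off the endpoints of the H-difference) shows $(c\odot\tilde{f})(x_0+h)\ominus_H(c\odot\tilde{f})(x_0) = c\odot(\tilde{f}(x_0+h)\ominus_H\tilde{f}(x_0))$. Dividing by $h$ and applying property (2), $d_F(c\odot\tfrac{1}{h}\odot\tilde{u},\,c\odot\tilde{f}^{\prime}(x_0)) = |c|\,d_F(\tfrac{1}{h}\odot\tilde{u},\,\tilde{f}^{\prime}(x_0)) \to 0$. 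The subtle case is $c<0$: here the $\alpha$-level endpoints of $c\odot\tilde{f}$ swap, so the candidate H-difference has $\tilde{d}^{L}_\alpha = c(\tilde{f}^{U}_\alpha(x_0+h)-\tilde{f}^{U}_\alpha(x_0))$ and $\tilde{d}^{U}_\alpha = c(\tilde{f}^{L}_\alpha(x_0+h)-\tilde{f}^{L}_\alpha(x_0))$, and one must verify the validity condition $\tilde{d}^{L}_\alpha \leq \tilde{d}^{U}_\alpha$. Multiplying through by $1/c<0$ reverses the inequality, so it becomes $\tilde{f}^{L}_\alpha(x_0+h)-\tilde{f}^{L}_\alpha(x_0) \leq \tilde{f}^{U}_\alpha(x_0+h)-\tilde{f}^{U}_\alpha(x_0)$, which is precisely the condition already guaranteed by the existence of $\tilde{f}(x_0+h)\ominus_H\tilde{f}(x_0)$ (compare condition (\ref{case1})). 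Hence the H-difference of $c\odot\tilde{f}$ exists and again equals $c\odot(\tilde{f}(x_0+h)\ominus_H\tilde{f}(x_0))$, after which the limit is obtained as before. The main obstacle — really the only non-routine point — is this sign-reversal check for $c<0$; everything else reduces to straightforward applications of the defining relation of the H-difference and the triangle-inequality/homogeneity properties of $d_F$.
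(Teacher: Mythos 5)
Your proof is correct. Note, however, that the paper itself offers no proof of this statement: it is quoted verbatim from Diamond and Kloeden \cite{PH}, so there is no in-paper argument to compare against. Judged on its own, your two-step scheme (exhibit the H-difference algebraically, then pass to the limit with properties (2) and (3) of $d_F$ from Definition \ref{def4}) is exactly the standard route and all the steps check out; the only unstated ingredient is the distributivity $\frac{1}{h}\odot(\tilde{u}\oplus\tilde{v})=\frac{1}{h}\odot\tilde{u}\oplus\frac{1}{h}\odot\tilde{v}$, which is Theorem \ref{theorem2.1}(iv). One simplification worth noting: that same item of Theorem \ref{theorem2.1} holds for \emph{every} real scalar, so from $\tilde{f}(x_0+h)=\tilde{f}(x_0)\oplus\tilde{u}$ you get $c\odot\tilde{f}(x_0+h)=c\odot\tilde{f}(x_0)\oplus c\odot\tilde{u}$ directly for any $c\in\mathbb{R}$; since $c\odot\tilde{u}$ is automatically a fuzzy number, the H-difference of $c\odot\tilde{f}$ exists and equals $c\odot\tilde{u}$ with no case split on the sign of $c$ and no level-set verification. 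Your endpoint computation for $c<0$ is not wrong, but it makes the "only non-routine point" out of something the algebraic identity gives for free (and, taken literally as an existence argument via level sets, it would also owe a check that the candidate endpoints are monotone in $\alpha$ — a check that identifying the candidate as $c\odot\tilde{u}$ renders unnecessary).
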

\begin{remark}
By this property, we have a class of H-differentiable fuzzy-valued functions which are linear combination of H-differentiable fuzzy-valued functions.
\end{remark}
\begin{par}
We define second order and higher order H-differentiability under the assumption that the fuzzy-valued function is H-differentiable. $C^{n}(X, F(\mathbb{R}))$, $n \geq 1$ denotes the space of n-times continuously H-differentiable fuzzy-valued functions from $X \subseteq \mathbb{R}$ to $F(\mathbb{R})$. By Theorem 5.2 of \cite{KA} for $\tilde{f} \in C^{n}(X, F(\mathbb{R}))$ we have
\begin{eqnarray*}
(\tilde{f}^{(i)}(x))_{\alpha} = [(\tilde{f}_{\alpha}^{L}(x))^{(i)}, (\tilde{f}_{\alpha}^{U}(x))^{(i)}],
\end{eqnarray*}
for $i = 0,1,2,...n$ and $\alpha \in [0,1]$. Here $i$ denotes $i^{th}$ order derivative.
\end{par}
We discuss higher order H-differentiability by some illustrations.
\begin{example}
Let $\tilde{f}: [0,\pi] \to \mathbb{R}$ be a fuzzy-valued function defined as $\tilde{f}(x) = \tilde{a}\odot \sin(x)$ where $\tilde{a}$ is a fuzzy number with $\alpha$-level sets $\tilde{a}_{\alpha} = [\tilde{a}^{L}_{\alpha}, \tilde{a}^{U}_{\alpha}]$. We see that $g(x) = \sin(x)$ is non-negative for $x \in [0,\pi]$ but $g^{\prime}(x) = \cos(x) \geq 0$ for $x \in [0, \pi/2]$. Therefore, H-differences exist for $x \in [0, \pi/2]$. Hence, the function $\tilde{f}(x) = \tilde{a}\odot \sin(x)$ is H-differentiable on $x \in [0, \pi/2]$ while it is gH-differentiable for $x \in [0,\pi]$. Here we see that the $\alpha$-level functions $\tilde{f}_{\alpha}^{L}(x) =  \tilde{a}_{\alpha}^{L}\sin(x) $ and $\tilde{f}_{\alpha}^{U}(x) =  \tilde{a}_{\alpha}^{U}\sin(x)$ are differentiable for $x \in [0,\pi]$. Now to check the second order H-differentiability of the given fuzzy-valued function, we consider $\tilde{f}(x) = \tilde{a}\odot \sin(x)$ for $x \in [0,\pi/2]$ where it is H-differentiable with H-derivative $\tilde{f}^{\prime}(x) = \tilde{a}\odot \cos(x)$. We see that the level functions $\tilde{a}_{\alpha}^{L}\cos(x)$ and $\tilde{a}_{\alpha}^{U}\cos(x)$ are differentiable but it is not H-differentiable because here $g^{\prime}(x) = -\sin(x) \leq 0$ for $x \in [0,\pi/2]$. It is to be noticed that the function is twice gH-differentiable on $x \in [0,\pi]$.
\end{example}
%====================================================================================================================

\begin{example}
Consider a fuzzy-valued function $\tilde{f}(x) = \tilde{a}\odot x^{n}$, $n \geq 1$ defined on $x \in \mathbb{R}_{+}$. Here $g(x) = x^{n} > 0$ with $g^{\prime}(x) = n x^{n-1} > 0$, therefore by Proposition \ref{pro}, H-differences exist and function is H-differentiable on $\mathbb{R}_{+}$ with H-derivative $\tilde{f}^{\prime}(x) = \tilde{a}\odot (nx^{n-1})$. It is easy to see that the function $\tilde{f}^{\prime}(x)$ is H-differentiable as $g(x) = (nx^{n-1}) > 0 $ with $g^{\prime}(x) = n(n-1) x^{n-2} > 0 $ for $x \in \mathbb{R}_{+}$. This function is n-times H-differentiable on $x \in \mathbb{R}_{+}$.
\end{example}
\begin{example}
Consider a fuzzy-valued function $\tilde{f}(x) = \tilde{a}_{n}\odot x^{n} \oplus \tilde{a}_{n-1}\odot x^{n-1} \oplus...\oplus \tilde{a}_{1}\odot x \oplus \tilde{a}_{0} $, $n \geq 0$ defined on $x \in \mathbb{R}_{+}$. It is easy to see that the function is n-times H-differentiable on $x \in \mathbb{R}_{+}$. 
\end{example}
\begin{example}
Consider a fuzzy-valued function $\tilde{f}(x) = \tilde{a}\odot e^{x}$ defined on $x \in I = \mathbb{R}_{+} \cup \{0\}$. Here $g(x) = g^{\prime}(x) = g^{\prime\prime}(x) =....= e^{x} > 0$ for $x \in I$, therefore by Proposition \ref{pro}, the function is n-times H-differentiable on $x \in I$.
\end{example}
%=====================================================================================================================
We turn to Hukuhara differentiability of multi-variable fuzzy-valued functions. Proposition \ref{pro} can be generalized for multi-variable fuzzy-valued function.
\begin{proposition}\label{pro1}
Let $\tilde{c} \in F(\mathbb{R}) ~and~ g: I^{n} \to \mathbb{R}_{+}$, $ ( I^{n} = I \times ... \times I ~(n ~times)$, $I = (a,b) \subset \mathbb{R}_{+} )$  be differentiable at $\bar{x}_{0} \in I^{n}$. Define $\tilde{f}: I^{n} \to F(\mathbb{R})$ by $\tilde{f}(\bar{x}) = \tilde{c}\odot g(\bar{x})$, for all $\bar{x} \in I^{n}$. If we suppose that the partial derivatives $ {{\partial g(\bar{x}_{0})} \over {\partial x_{i}}}  > 0 $, $i = 1,...,n$ then Hukuhara differences $ ({\tilde{f}(\bar{x}_{0} + h) {\ominus}_{H} \tilde{f}(\bar{x}_{0})}$, ${\tilde{f}(\bar{x}_{0}) {\ominus}_{H} \tilde{f}(\bar{x}_{0}- h)}$ for sufficiently small $ h > 0 ) $  exist. Therefore, the partial H-derivatives of $\tilde{f}$ exist and is given as
$ {{\partial \tilde{f}} \over {\partial x_{i}}} = \tilde{c} \odot {{\partial g} \over {\partial x_{i}}}$,$i = 1,...,n$ at $\bar{x}_{0}$. 
\end{proposition}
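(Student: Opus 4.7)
The plan is to reduce this multivariable statement to the single-variable Proposition \ref{pro} by slicing along one coordinate at a time. Fix an index $i \in \{1,\ldots,n\}$, write $\bar{x}_0 = (x_1^0,\ldots,x_n^0)$, and introduce the one-variable restriction $\phi_i: J \to \mathbb{R}_+$ defined by $\phi_i(t) = g(x_1^0,\ldots,x_{i-1}^0,t,x_{i+1}^0,\ldots,x_n^0)$, where $J \subset I$ is a small open interval containing $x_i^0$. By the very definition of the partial derivative, $\phi_i$ is differentiable at $x_i^0$ with $\phi_i'(x_i^0) = \partial g(\bar{x}_0)/\partial x_i$, a quantity that is strictly positive by hypothesis.

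Next I would apply Proposition \ref{pro} to the sliced fuzzy-valued function $\tilde{F}_i: J \to F(\mathbb{R})$ given by $\tilde{F}_i(t) = \tilde{c} \odot \phi_i(t)$. Since $\phi_i'(x_i^0) > 0$, that proposition guarantees, for sufficiently small $h > 0$, the existence of both Hukuhara differences $\tilde{F}_i(x_i^0+h) \ominus_H \tilde{F}_i(x_i^0)$ and $\tilde{F}_i(x_i^0) \ominus_H \tilde{F}_i(x_i^0-h)$, together with the H-differentiability of $\tilde{F}_i$ at $x_i^0$ and the formula $\tilde{F}_i'(x_i^0) = \tilde{c} \odot \phi_i'(x_i^0)$. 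Because these H-differences coincide with the increments $\tilde{f}(\bar{x}_0 + h e_i) \ominus_H \tilde{f}(\bar{x}_0)$ and $\tilde{f}(\bar{x}_0) \ominus_H \tilde{f}(\bar{x}_0 - h e_i)$ that define the partial H-derivative in direction $e_i$, one concludes
\[
\frac{\partial \tilde{f}}{\partial x_i}(\bar{x}_0) \;=\; \tilde{c} \odot \frac{\partial g}{\partial x_i}(\bar{x}_0).
\]
Running the same argument for each $i = 1,\ldots,n$ produces all $n$ partial H-derivatives.

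I do not anticipate a genuine obstacle, since partial differentiation is intrinsically a one-variable notion and so the multivariable content of the statement dissolves once the slice $\phi_i$ is fixed. The only point deserving attention is that the key mechanism inside the proof of Proposition \ref{pro} — decomposing $\tilde{c} \odot \phi_i(x_i^0 + h) = \tilde{c} \odot \phi_i(x_i^0) \oplus \tilde{c} \odot \omega_i(x_i^0,h)$ with $\omega_i(x_i^0,h) = \phi_i(x_i^0+h) - \phi_i(x_i^0) > 0$, via the same-sign distributivity granted by Theorem \ref{theorem2.1}(iii) — transfers to the slice without modification. The positivity of $\omega_i(x_i^0,h)$ for small $h > 0$ follows directly from the limit definition $\phi_i'(x_i^0) = \lim_{h\to 0} \omega_i(x_i^0,h)/h > 0$, exactly as in the single-variable proof. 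No interaction between coordinates is required, and the final formula for the partial H-derivative falls out of the one-variable conclusion.
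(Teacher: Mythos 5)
Your proposal is correct and follows essentially the same route as the paper: the paper's own proof simply observes that each partial derivative is an ordinary one-variable derivative in $x_i$ and invokes the reasoning of Proposition \ref{pro}. You have merely made the coordinate-slicing explicit via the restriction $\phi_i$, which is a faithful (and more carefully written) version of the same argument.
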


\begin{proof}
The partial derivative $ {{\partial g(\bar{x}_{0})} \over {\partial x_{i}}}  > 0 $, $i = 1,...,n$ is an ordinary derivative with respect to each $x_{i}$. Using the same reasoning used in the Proposition \ref{pro}, we have the proof. \qed
\end{proof}

\begin{definition}\label{def10}
We say that $\tilde{f}$ is H-differentiable at $\bar{x}^{0}$ if and only if one of the partial H-derivatives ${\partial\tilde{f}}/ {\partial x_{1}},..., {\partial\tilde{f}}/ {\partial x_{n}}$ exists at $\bar{x}^{0}$ and the remaining n-1 partial H-derivatives exist on some neighbourhood of $\bar{x}^{0}$ and are continuous at $\bar{x}^{0}$ (in the sense of fuzzy-valued function).
\end{definition}
\begin{remark}
To check second order Hukuhara differentiability of fuzzy-valued function $\tilde{f}(\bar{x}) = \tilde{c}\odot g(\bar{x})$ , we have to check second order derivatives $ {{\partial^2 g(\bar{x}_{0})} \over {{\partial x_{i}}{\partial x_{j}}}}  > 0 $, $i,j = 1,...,n$.
\end{remark}
%=====================================================================================================================
\section{Fuzzy modeling and H-differentiability}
Fuzzy modeling means develop a mathematical model of real world problem under uncertain situations. If the parameters in the system are known approximately, we model the system as a fuzzy system. We discuss some aspects of fuzzy modeling. Consider a crisp function 

\begin{equation}\label{c1}
f(x) = 2x,
\end{equation} 
where $x \in \mathbb{R}_{+}$. If we fuzzify the function by approximating real number $2$ or making it to fuzzy, we have a fuzzy-valued function 

\begin{equation}\label{f2}
\tilde{f}_{1}(x) = \tilde{2} \odot x,
\end{equation} 
where $\tilde{2}$ is a fuzzy number. Specifically, we take $\tilde{2} = (0,2,4)$ a triangular fuzzy number. We calculate $\tilde{f}_{1}(1) = \tilde{2} \odot 1 = (0,2,4)$, $\tilde{f}_{1}(2) = \tilde{2} \odot 2 = (0,4,8)$,...and so on. The $\alpha$-level sets of $\tilde{f}_{1}(x)$ are $[(0 + 2\alpha)x, (4-2\alpha)x]$, for $x\in \mathbb{R}_{+}$ and $\alpha \in [0,1]$. The $\alpha$-level functions are $(f_{1})_{\alpha}^{L}(x) = (2\alpha)x$ and $(f_{1})_{\alpha}^{U}(x) = (4-2\alpha)x$. The graphical representation of $\alpha$-level functions for $x = 1,2,3,4,5$ are shown in Fig. 1. In Fig. $1(a)$, the graphs of lower level functions $(f_{1})_{\alpha}^{L}(x) = (2\alpha)x$ are drawn with respect to $\alpha$ by fixing $x = 1,2,3,4,5$. In Fig. $1(b)$, the graphs of upper level functions $(f_{1})_{\alpha}^{U}(x) = (4-2\alpha)x$ are drawn with respect to $\alpha$ by fixing $x = 1,2,3,4,5$. 
\begin{figure}[h]\label{fig1}  
\centering
\includegraphics[width=5.0in]{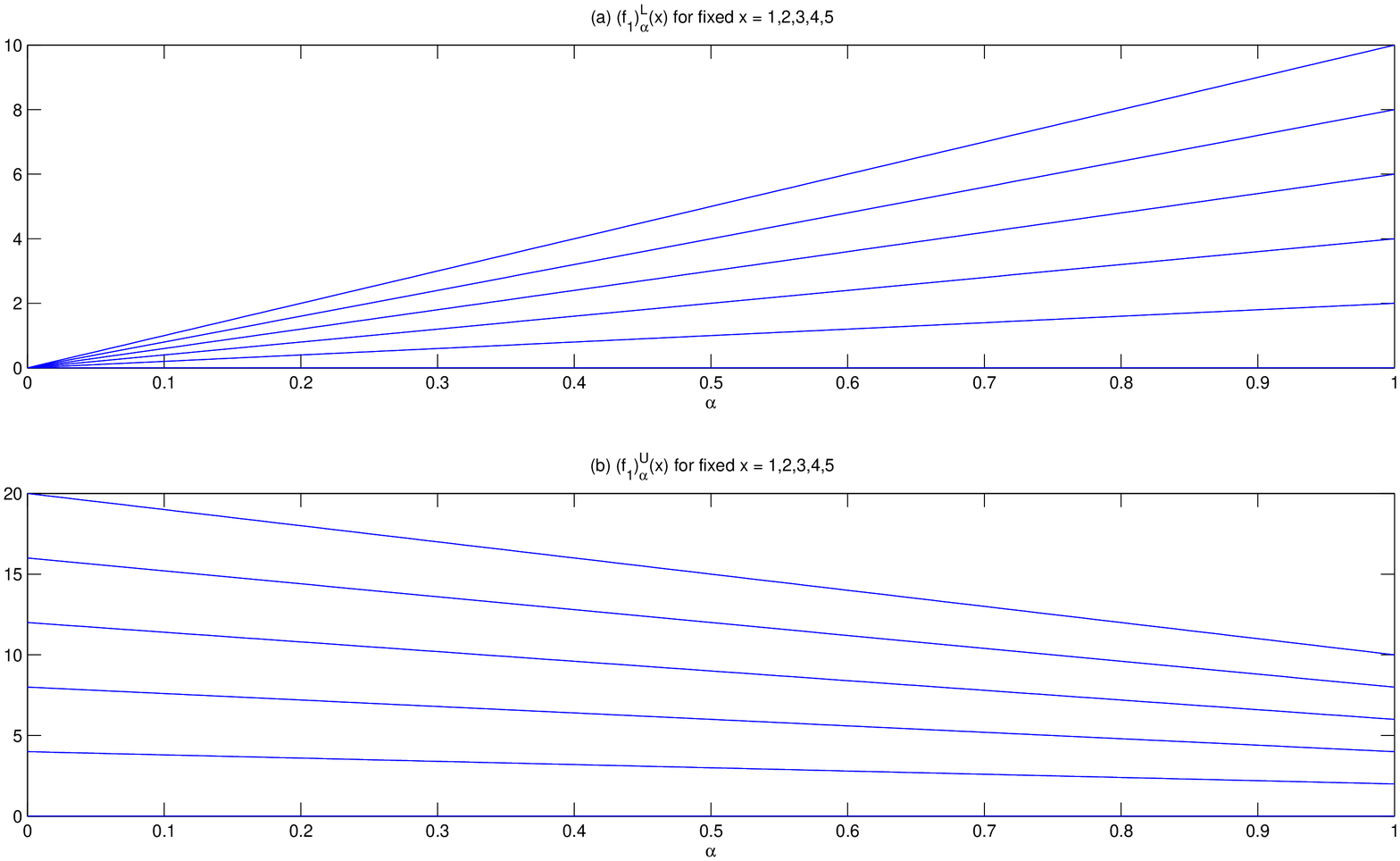}
\caption{}
\end{figure}

Now if we fuzzify the crisp function as 
\begin{equation}\label{f3}
\tilde{f}_{2}(x) = \tilde{1} \odot (2x),
\end{equation} 
where $\tilde{1} = (0,1,2)$ then we have $\tilde{f}_{2}(1) = \tilde{1} \odot 2 = (0,2,4)$, $\tilde{f}_{2}(2) = \tilde{1} \odot 2 \cdot 2 = (0,4,8)$. The $\alpha$-level sets of $\tilde{f}_{2}(x)$ are $[(0 + \alpha)(2x), (2-\alpha)(2x)]$, for $x\in \mathbb{R}_{+}$. The graphical representation of $\alpha$-level functions for $x = 1,2,3,4,5$ are shown in Fig. 2. The Fig. 2(a) shows the graphs of lower level functions $(f_{2})_{\alpha}^{L}(x) = \alpha(2x)$ and Fig. 2(b) shows the graphs of upper level functions $(f_{2})_{\alpha}^{U}(x) = (2-\alpha)2x$, with respect to $\alpha \in [0,1]$, for fixed values of $x = 1,2,3,4,5$.
\begin{figure}[h]\label{fig2}
\centering
\includegraphics[width=5.0in]{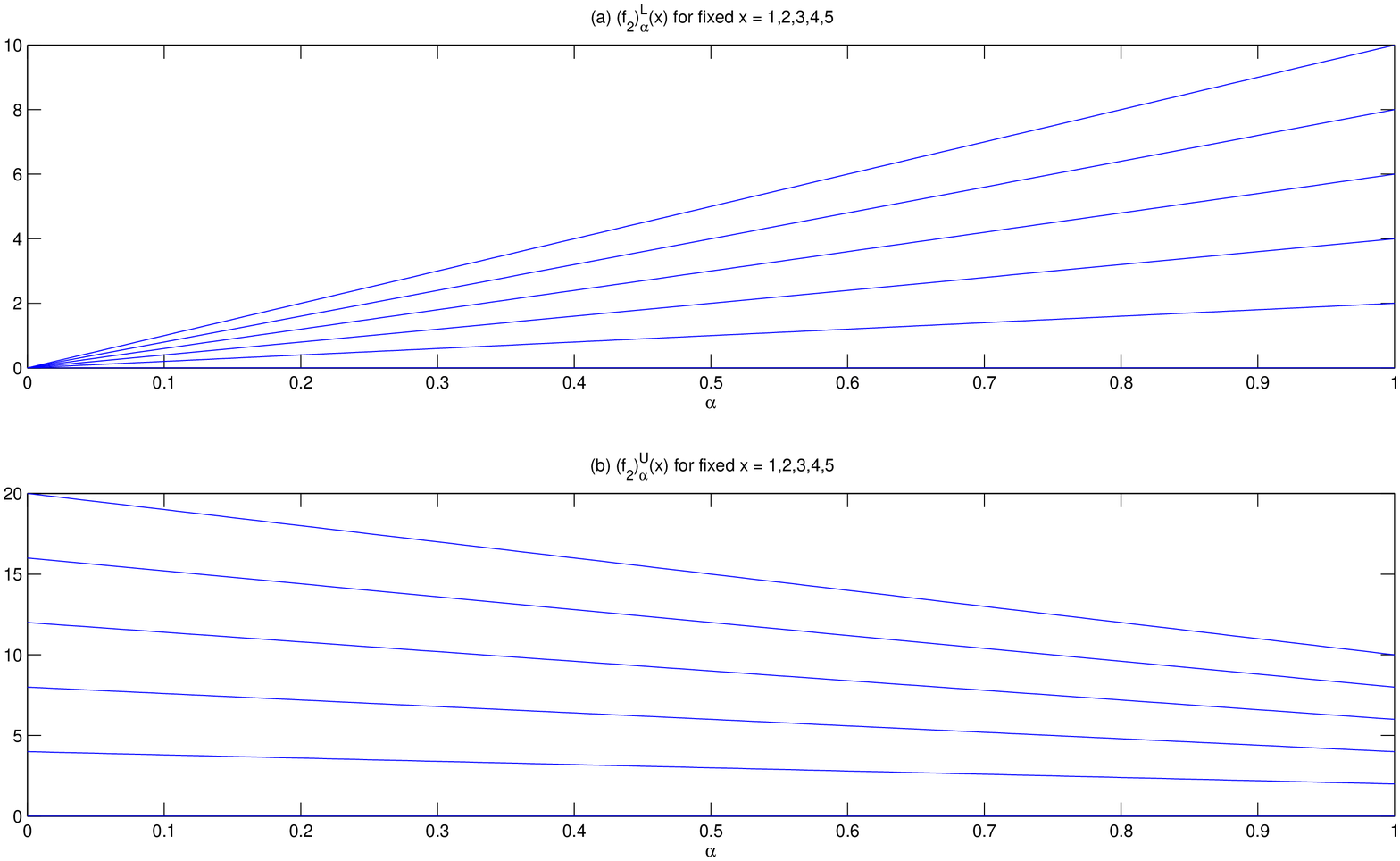}
\caption{}
\end{figure}
\begin{par}
We see that we can make a fuzzy model by different ways of fuzzification which sometimes gives same fuzzy output and sometimes it may gives different output. For instance, we fuzzify the crisp function (\ref{c1}) using (\ref{f2}), by taking $\tilde{2} = (1,2,3) $. Here we have changed the spread of fuzzy coefficient $\tilde{2}$. Then $\tilde{f}(1) = \tilde{2} \odot 1 = (1,2,3)$ where as (\ref{f3}) gives $\tilde{f}(1) = \tilde{1} \odot 2 = (0,1,2) \odot 2 = (0,2,4)$. We see that though the centre value of both fuzzy-valued outputs are same but the spreads are different. By Fig. 1 and 2, we say that both functions $\tilde{f}_{1}$ and $\tilde{f}_{2}$ have same $\alpha$-level functions. Therefore, in fuzzy modelling we can use function $\tilde{f}_{1}$ or $\tilde{f}_{2}$ as a fuzzy model for given crisp function.
\end{par}
%=====================================================================================================================
\begin{par}Now we see the effect of modelling on existence of H-differentiability. Consider a multi-variable fuzzy-valued function and check its H-differentiability.
\end{par}
\begin{example}\label{mex1}
\[
\tilde{f}(x_{1},x_{2}) = (\tilde{1} \odot x_{1}^{3}) \oplus (\tilde{2} \odot x_{2}^{3}) \oplus (\tilde{1}\odot x_{1}\cdot x_{2}),  
\] 
where $\tilde{1} = (-1,1,3)$ and $\tilde{2} = (1,2,3)$ are triangular fuzzy numbers and $x_{1},x_{2} \in \mathbb{R}_{+}$. 
Using fuzzy arithmetic, the $\alpha$-level functions are

\begin{equation}\label{eq1}
\tilde{f}_{\alpha}^{L}(x_{1},x_{2}) = (-1 + 2\alpha) x_{1}^{3} + (1 + \alpha) x_{2}^{3} + (-1 + 2\alpha) (x_{1}\cdot x_{2}),
\end{equation}

\begin{equation}\label{eq2}
\tilde{f}_{\alpha}^{U}(x_{1},x_{2}) = (3-2\alpha) x_{1}^{3} + (3 - \alpha) x_{2}^{3} + (3-2\alpha) (x_{1}\cdot x_{2}),
\end{equation}

for $\alpha \in [0,1]$. To check existence of Hukuhara differentiability, we put the given fuzzy-valued function in the following form
\[
\tilde{f}(x_{1},x_{2}) = \tilde{1}\odot g_{1}(x_{1},x_{2}) \oplus \tilde{2}\odot g_{2}(x_{1},x_{2}) \oplus \tilde{1}\odot g_{3}(x_{1},x_{2}),
\] 
where $g_{1}(x_{1},x_{2}) = x_{1}^{3} ,g_{2} (x_{1},x_{2})= x_{2}^{3}, g_{3}(x_{1},x_{2})= x_{1}\cdot x_{2}$ are real-valued functions defined on $\mathbb{R}^{2}_{+}$. We see that partial derivative of $g_{1}(x_{1},x_{2}) = x_{1}^{3}$ with respect to $x_{2}$ is zero. So the condition for existence of H-differentiability given in Proposition \ref{pro1} does not hold. Similarly, the partial derivative of $g_{2} (x_{1},x_{2})= x_{2}^{3}$ with respect to $x_{1}$ is zero. We modify the form of fuzzy-valued objective function as:
\[
\tilde{f}(x_{1},x_{2}) = \tilde{1}\odot g_{1}(x_{1},x_{2}) \oplus \tilde{1}\odot g_{2}(x_{1},x_{2}),
\] 
where $g_{1}(x_{1},x_{2}) = ( x_{1}^{3} + 2x_{2}^{3}) , g_{2}(x_{1},x_{2})= x_{1}\cdot x_{2}$. Here we see that the partial derivatives of $g_{1}, g_{2}$ with respect to $x_{1},x_{2}$ are greater than zero, since $x_{1}$ and $x_{2}$ are in $\mathbb{R}_{+}$. Therefore, by Proposition \ref{pro1}, we say that the later fuzzy-valued function is H-differentiable. \\

To check second order H-differentiability of $\tilde{f}$, we need to check $ \frac {\partial ^{2} g(\bar{x}^{0})} {\partial x_{i} \partial x_{j}} > 0 $, for $i,j = 1,2$. But we see that $ \frac {\partial ^{2} g_{1}(\bar{x})} {\partial x_{2} \partial x_{1}} = 0 $ and $ \frac {\partial ^{2} g_{2}(\bar{x})} {\partial x_{1} \partial x_{2}} = 0 $. Therefore, Hukuhara differences for second order H-partial derivatives of $\tilde{f}$ do not exist. We further modify the form of fuzzy-valued objective function as 
\[\tilde{f}(x_{1},x_{2}) = \tilde{1}\odot g(x_{1},x_{2}) ,\]
where $g(x_{1},x_{2}) = x_{1}^3 + 2x_{2}^{3} + x_{1}x_{2}$ and $\tilde{1} = (0,1,2)$. For this function $g(x_{1},x_{2})$, first and second order partial derivatives are positive with respect to both variables. Therefore, the first and second H-partial derivatives exist for the fuzzy-valued function $\tilde{f}(x_{1},x_{2}) = \tilde{1}\odot g(x_{1},x_{2})$. 
\end{example}
\begin{remark}
In general, we can make fuzzy-valued function $\tilde{f}(\bar{x}) = \tilde{c}\odot g(\bar{x})$ , $\bar{x} = (x_{1},x_{2},...,x_{n})$ Hukuhara differentiable by modifying fuzzy modeling. For instance, for $\tilde{f}(x_{1},x_{2}) = \tilde{1}\odot g_{1}(x_{1},x_{2}) \oplus \tilde{2}\odot g_{2}(x_{1},x_{2})$, where $g_{1}(x_{1},x_{2}) = x_{1}$ and $g_{2}(x_{1},x_{2}) = x_{2}$, $x_{1}, x_{2} \in \mathbb{R}_{+}$, $ \frac {\partial g_{1}(\bar{x})} {\partial x_{2}} = 0 $ and $ \frac {\partial g_{2}(\bar{x})} {\partial x_{1}} = 0 $. But if modify the fuzzy model as $\tilde{f}(x_{1},x_{2}) = \tilde{1}\odot g(x_{1},x_{2})$, $g(x_{1},x_{2}) = x_{1} + 2x_{2}$ then $ \frac {\partial g(\bar{x})} {\partial x_{i}} > 0 $, for $i = 1,2$ and hence Hukuhara differences exist. 
\end{remark}
\begin{par}
Differentiability plays key role in solving optimization problems. To obtain the optimality conditions in a fuzzy optimization problem, it is necessary that the fuzzy-valued objective function must be twice differentiable. In order to apply the optimality conditions (see ref. \cite{PA} ) with respect to Hukuhara differentiability of a fuzzy-valued objective function, we need to check the first and second order H-differentiability. Also, to solve a fuzzy optimization problem using Newton method proposed in \cite{PI}), H-differentiability of fuzzy-valued function is needed. So, we consider an optimization problem where parameters of the objective function are fuzzy numbers and check its H-differentiability. We see that by changing the fuzzy model of problem, the fuzzy function can be H-differentiable.
\end{par}
\begin{example}\label{mex2}
The fuzzy model of an optimization problem where the parameters (approximate profits in maximization problem) are fuzzy numbers is given as follows:
\[ \tilde{f}(x_{1}, x_{2}) = \tilde{2}\odot x_{1}^{2} \oplus \tilde{2} \odot x_{2}^2 \oplus \tilde{3},\]
where $\tilde{2} = (1,2,4)$ and $\tilde{3} = (1,3,5)$ are triangular fuzzy numbers and $x_{1}, x_{2} \in \mathbb{R}_{+}$. \\

In order to solve the problem by either \cite{PA} or \cite{PI} method, we need to check the Hukuhara differentiability of the fuzzy-valued function. We express the objective function as
\[ \tilde{f}(x_{1}, x_{2}) = \tilde{2}\odot g_{1}(x_{1},x_{2}) \oplus \tilde{2} \odot g_{2}(x_{1},x_{2}) \oplus \tilde{3} \odot g_{3}(x_{1},x_{2}) ,\]
where $g_{1}(x_{1},x_{2}) = x_{1}^{2}$, $g_{2}(x_{1},x_{2}) = x_{2}^{2}$ and $g_{3}(x_{1},x_{2}) = 1$. Using the same arguments given in previous example, the fuzzy-valued function is not H-differentiable as the partial derivative of $g_{1}(x_{1},x_{2})$ with respect to $x_{2}$ and the partial derivative of $g_{2} (x_{1},x_{2})$ with respect to $x_{1}$ is zero. So the condition for existence of H-differentiability given in Proposition \ref{pro1} does not hold. To make it H-differentiable, we can change the fuzzification of the problem as 
\[
\tilde{f}(x_{1}, x_{2}) = \tilde{1}\odot g(x_{1},x_{2}),
\] 
where $\tilde{1} = (1/2, 1, 2)$ a triangular fuzzy number and $g(x_{1},x_{2}) = 2x_{1}^{2} + 2x_{2}^{2} + 2$. If we do this fuzzy modeling, then fuzzy constant $\tilde{3}$ is changed to $\tilde{2}$. The change is allowable because $3$ belong to fuzzy number $\tilde{2}$ with lower degree of membership. By this new modeling, we can make the fuzzy-valued function $\tilde{f}(x_{1},x_{2})$ H-differentiable and it is possible to study the optimality conditions.
\end{example}
%=====================================================================================================================
\section{Conclusions}
Existence of H-differentiability is discussed in detailed for single-variable and multi-variable fuzzy-valued functions. H-differentiability of polynomial fuzzy-valued functions, sine and exponential fuzzy-valued functions are discussed. We see that some of given functions which are not H-differentiable but gH-differentiable. We also discussed higher order H-differentiability of fuzzy-valued functions. It is very interesting to note that fuzzy modeling effects on the existence of H-differentiability. In Examples \ref{mex1} and \ref{mex2}, we see that if modify the fuzzy model of given functions, they become H-differentiable.

%=====================================================================================================================
\begin{acknowledgements}
This research work is supported by National Board for Higher Mathematics (NBHM) , Department of Atomic Energy (DAE), India.
\end{acknowledgements}
%======================================================================================================================

\end{document}